\newcommand{\C}{\mathbb{C}}
\newcommand{\Z}{\mathbb{Z}}
\newcommand{\cO}{\mathcal{O}}
\newcommand{\cV}{\mathcal{V}}
\newcommand{\bW}{\mathsf{W}}
\newcommand{\cL}{\mathcal{L}}
\newcommand{\cI}{\mathcal{I}}
\newcommand{\bE}{\mathsf{E}}
\newcommand{\cG}{\mathcal{G}}
\newcommand{\cU}{\mathcal{U}}
\newcommand{\fZ}{\mathfrak{Z}}
\newcommand{\cH}{\mathcal{H}}
\newcommand{\Pt}{\mathbb{P}^2}
\newcommand{\cF}{\mathcal{F}}
\newcommand{\F}{\mathcal{Fl}}
\newcommand{\lt}{\mathfrak{t}}
\newcommand{\Gr}{\mathcal{G}r}
\newcommand{\V}{\mathcal{V}}
\DeclareMathOperator{\Hilb}{Hilb}
\DeclareMathOperator{\Ext}{Ext}
\DeclareMathOperator{\ch}{ch}
\DeclareMathOperator{\td}{td}
\DeclareMathOperator{\Lie}{Lie}
\DeclareMathOperator{\End}{End}
\DeclareMathOperator{\Hom}{Hom}
\newcommand{\cHom}{Hom}
\DeclareMathOperator{\supp}{supp}
\DeclareMathOperator{\spn}{span}
\newtheorem{thm}{Theorem}
\newtheorem{lemma}{Lemma}
\begin{document}
\title{Vertex Operators, Grassmannians, and Hilbert Schemes}
\author{Erik Carlsson}
\maketitle
 
\begin{abstract}
We approximate the infinite Grassmannian by finite-dimensional cutoffs, and define a family of
fermionic vertex operators as the limit of geometric correspondences on the equivariant cohomology
groups, with respect to a one-dimensional torus action. We prove that in the localization basis, 
these are the well-known fermionic vertex operators on the infinite wedge representation. 
Furthermore, the boson-fermion correspondence,
locality, and intertwining properties with the Virasoro algebra are the limits of relations on the 
finite-dimensional cutoff spaces, which are true for geometric reasons.
We then show that these operators are also, almost by definition,
the vertex operators defined by Okounkov and the author in \cite{CO},
on the equivariant cohomology groups of the Hilbert scheme of points on $\C^2$, with respect to
a special torus action.

\end{abstract}
 
\section{Introduction}
The infinite wedge representation,
\[\Lambda^{\infty/2}(F \cdot \Z) = \bigoplus_{m\in \Z} \Lambda^{\infty/2}_m (F \cdot \Z),\]
is the infinite-dimensional vector space over $F$ with basis
\begin{equation}
\label{wedgebasis}
e_{\mu,m} = e_{m+\mu_1} \wedge e_{m+\mu_2-1} \wedge e_{m+\mu_3-2} \wedge \cdots,
\end{equation}
where $e_j,j\in \Z$ is a basis of $F\cdot \Z$, and $\mu$ is a partition.
It realizes representations of the Virasoro algebra,
affine Ka\c{c}-Moody algebras, and other infinite-dimensional Lie algebras \cite{K,BO}.
These representations, and their relations with each other,
can be explicitly constructed by formal generating functions 
of operators known as vertex operators. These operators include
include the operators of ``wedging and contracting'' with $e_j$,
and a larger family with interesting intertwining properties with the above Lie algebras.

Surprisingly, these operators also appear in the cohomology groups of
Hilbert schemes of points on a smooth complex surface, and more general moduli spaces of sheaves.
If $S$ is a smooth surface, let $\Hilb_k S$ denote the 
the Hilbert scheme of zero-dimensional subschemes $Z \subset S$, such that $\dim(H^0(\cO_Z)) = k$.
There are many interesting and valuable correspondences acting on the direct sum
\begin{equation}
\label{F}
\cU'_S = \bigoplus_{k\geq 0} \cU_{S,k}', \quad \cU_{S,k}' = H^*(\Hilb_k S,\C),
\end{equation}
and when $S=\C^2$,
$\cU'_S \cong \Lambda^{\infty/2}_0(\C\cdot \Z)$, and we obtain geometric constructions of the above Lie algebras.

For instance, consider the Nakajima subvariety \cite{L,Nak1,NakL},
\begin{equation}
\begin{split}
Z_n = \big\{(Z,x,Z')  \in & \Hilb_k S \times S \times  \Hilb_{k+n} S\ \big| \\
 & Z \subset Z',\ \supp(\cO_{Z'}/\cO_{Z}) = \{x\} \big\},
\end{split}
\end{equation}
which is a singular variety with a well-defined fundamental class
\[[Z_n] \in H_*(\Hilb_k S) \otimes S \otimes H_*(\Hilb_{k+n} S).\]
If $S$ is a compact variety, then by Poincar\'{e} duality, we obtain an operator
\begin{equation}
\label{corr}
\alpha_n(x) : \cU_k' \rightarrow \cU_{k-n}',\quad
\alpha_n(x)(\gamma) = p_{3*}\left(\left(p_1^*(\gamma) \cup p_2^*(x)\right) \cap [Z_{-n}]\right)
\end{equation}
for $n < 0$, and dual operators for $n>0$, under the inner product
\begin{equation}
\int x \cup y.
\label{iprod}
\end{equation}
Nakajima proved that these operators
satisfy the commutation relations of an infinite-dimensional Heisenberg algebra.

Also using correspondences, Grojnowski \cite{Groj} described a family of vertex operators
on the cohomology groups. These
comprise a larger family of operators, but they are formally related to Nakajima's
operators. In the case of the Hilbert scheme on $\C^2$,
this relationship is just the boson-fermion correspondence \cite{FB,NakL},
and so one might say that the two collections of operators are equivalent. 
In a similar direction, Lehn constructed a
corresponding action of the Virasoro algebra using characteristic classes \cite{L}.

In \cite{CO}, Okounkov and the author defined a new family of vertex operators 
in terms of characteristic classes of universal bundles on the Hilbert scheme,
and proved a ``bosonization'' formula in terms of the Nakajima operators.
These operators were new, but in special cases, such as \eqref{T} below, they
agree with the vertex operators of the last paragraph. 
Furthermore, if $S$ carries a torus action, then
characteristic classes have an explicit expression in the fixed-point basis of equivariant
cohomology. In fact, the primary motivation was to calculate certain equivariant integrals
of characteristic classes on the Hilbert scheme of $S = \C^2$, with respect to a torus action \cite{NO,ErikTh}.
In this situation, $\Hilb_k S$ is not proper, but
one may simply define integration by the localization formula for equivariant cohomology.

In this paper, we define a family of vertex operators as limits of correspondences on
the equivariant cohomology groups of certain finite-dimensional Grassmannians.
These are the Grassmannians of half-dimensional subspaces of
the vector space of formal Laurent series with cutoffs in high and low degree (essentially the infinite Sato Grassmannian).
The circle acts on this space by rotation of functions.
We define operators in the same way as in \eqref{corr}, where
the correspondence is the two-step flag variety, and $x$ is replaced by a characteristic class.
We use the relationship between localization and geometry deduce
their (super) commutation relations, the boson-fermion correspondence, and their intertwining properties
with the Virasoro Lie algebra. We then conclude that a subset of these operators coincide the fermionic vertex operators
discussed, for instance, in \cite{K}, or \cite{FB} chapter 5, by identifying the fixed-point basis with $e_{\mu,m}$,
up to a normalization.

We then connect our results to the Hilbert scheme.
Let $S = \C^2$, and let the one-dimensional torus acts by
\begin{equation}
z\cdot(x,y) = (zx,z^{-1}y),\quad z \in T.
\label{T}
\end{equation}
The vertex operator in this paper equals the operator $\bW$ defined in \cite{CO},
in respective fixed-point bases of the two moduli spaces. We show that this isomorphism is represented by
by a characteristic class on the product of the two moduli spaces,
and that it intertwines the two vertex operators. 

Thus, this paper serves two purposes. First, it connects a family of vertex operators on
the cohomology of the Hilbert scheme to a simpler moduli space.
Second, many difficult identities follow from the equality of an integral over a complex variety with its
localization expression. We recover the aforementioned relationships between infinite-dimensional Lie
algebras and vertex operators as limits of such identities as the variety tends to the infinite Grassmannian.

\subsection{Acknowledgments} The author would like to thank Andrei Okounkov and David Nadler for valuable discussions.
Okounkov suggested connecting the Hilbert scheme to the Sato Grassmannian.

\section{Grassmannians}
\label{grasssection}
Let
\begin{equation}
\cH^{M,N} = \left(x^{M}\cdot \C[[x]]\right)/(x^N),\quad M \leq N,\quad \dim(\cH^{M,N}) = N-M.
\end{equation}
There is an action of the circle on $\cH^{M,N}$ by translation,
\[(z\cdot f)(x) = f(z^{-1}x),\]
and an invariant Hermitian metric,
\[\langle x^m,x^n \rangle = \delta_{m,n}.\]
These spaces have an obvious system of commuting inclusion and projection maps
\[i_{M,M'} : \cH^{M,N} \rightarrow \cH^{M',N} \quad
p_{N',N} : \cH^{M,N'} \rightarrow \cH^{M,N}, \]
for $M' \leq M$, $N \leq N'$. 

Let
\[\cG_{m}^{M,N} = \Gr(m+N,\cH^{M,N}),\]
the Grassmannian of $m+N$-dimensional subspaces of $\cH^{M,N}$, and define
\[f_m^{M,M'} : \cG_{m}^{M,N} \rightarrow \cG_{m}^{M',N},\quad g_m^{N,N'} : \cG_{m}^{M,N} \rightarrow \cG_{n}^{M,N'}\]
by
\[f_{m}^{M,M'}(V) = i_{M,M'}(V),\quad g_m^{N,N'}(V) = p_{N',N}^{-1}(V).\]
Their pullbacks form an inverse system on
\[\V'_{m,M,N} = H^*_T(\cG_{m}^{M,N},\C) \otimes_{\C[t]} \C(t),\quad t \in \Lie(T),\]
with the induced action of $T$ on $\cG_{m}^{M,N}$.
The dual maps form a direct system
\[f^{M,M'}_* : \V_{m}^{M,N} \rightarrow \V_{m}^{M',N},\quad g^{N,N'}_* : \V_{m}^{M,N} \rightarrow \V_{m}^{M,N'}\]
on $\V_{m}^{M,N}$, the dual vector space of $\V'_{m,M,N}$ over $\C(t)$.
Let $\V_m$ be the direct limit over $M,N$, and let $h_{m}^{M,N} : \V_{m}^{M,N} \rightarrow \V_m$ be the induced maps.

By the localization theorem, $v_U = i_U^*$ constitute a basis of $\cV_{m}^{M,N}$,
where $i_U : pt \rightarrow \cG_{m}^{M,N}$ is the inclusion map of a fixed subspace, $U$.
Therefore
\[\cV_m \cong \bigoplus_{\mu} \C(t) \cdot v_{\mu,m},\]
where $\mu$ is a partition,
\[v_{\mu,m} = h_{m}^{M,N}(v_{\mu,m}^{M,N}),\quad 
v_{\mu,m}^{M,N}=v_{U},\quad U = V_{\mu,m}^{M,N},\]
and
\[V_{\mu,m}^{M,N} = \bigoplus_{1 \leq i \leq m+N} \C \cdot x^{-m-(\mu_i-i+1)} \in \cG_{m}^{M,N},\quad
-M,N \gg 0.\]

$\cV'_{m,M,N}$ inherits an inner-product on cohomology,
\[(u,v)_{m}^{M,N} = \int_{\cG_{m}^{M,N}} u\cdot v,\]
which by localization is equal to
\[\sum_{U \in \left(\cG_{m}^{M,N}\right)^T} \frac{v_U\otimes v_U}{\alpha(U,U^\perp)^{-1}}  
\in \V_m^{M,N} \otimes \V_m^{M,N},\quad \alpha(U,V) = e\left(\Hom(U,V)\right).\]
Here $e$ is the Euler class, and $\Hom(U,U^\perp)$ is the tangent
space to $\cG_{m}^{M,N}$ at $U$, viewed as an equivariant bundle over a point.
Explicitly, if $\C_k$ is the representation of $T$ by $z \mapsto z^k$, and
\[U \cong \bigoplus_{k} \C_{k},\quad V \cong \bigoplus_{k'} \C_{k'},\]
then
\[\alpha(U,V) = t^{\dim(U)\dim(V)}\prod_{k,k'} (k'-k) \in H^*_T(pt) \cong \C[t].\]

This inner-product is not compatible with $f_*^{M,M'}$ or $g_*^{N,N'}$, but the normalized inner-product
\[\sum_{U}  \frac{c''_{m,m,M,N}}
{c_{\mu,m,M}\cdot c'_{\mu,m,N}}\frac{v_U \otimes v_U}{\alpha(U,U^\perp)^{-1}},\]
%
%
\[c_{\mu,m,M} = t^{|\mu|}\prod_{(i,j) \in \mu}(-M-m+i-j),\quad
c'_{\nu,n,N} = t^{|\nu|}\prod_{(i,j) \in \nu}(N+n-i+j),\]
%
%
\[c''_{m,n,M,N} = t^{(m+N)(-n-M)}\prod_{i=-(N-1)}^m \prod_{j=n+1}^{-M} (j-i),\]
is compatible, and so defines an element of 
$\cV_{m} \otimes \cV_{m}$. It is given in the dual basis by
\begin{equation}
\label {hook}
(v_{\mu,m},v_{\nu,m}) = \delta_{\mu,\nu} t^{2|\mu|} 
(-1)^{|\mu|}\prod_{\Box \in \mu} h(\Box)^2,
\end{equation}
where $h(\Box)$ is the hook length. 

There is a unique Hermitian inner-product with 
$\langle v_{\mu,m}, v_{\nu,n} \rangle = (v_{\mu,m},v_{\nu,n})$
with respect to the conjugation $\overline{f}(t) = \overline{f(-t)}$. Then
\begin{equation} \label{ebasis}
e_{\mu,m} = t^{-|\mu|}\prod_{\Box \in \mu} h(\Box)^{-1} v_{\mu,m}
\end{equation}
is an orthonormal basis. At $t=\sqrt{-1}$, identifying $e_{\mu,m}$ with the basis \eqref{wedgebasis}
gives an isomorphism $\cV_m \cong \Lambda_m^{\infty/2} (\Z \cdot \C)$.

\section{Vertex Operators}

Consider the flag variety
\begin{equation}
 \F_{m,n}^{M,N} = \left\{(U,V) \in \cG_{m}^{M,N} \times \cG_{n}^{M,N}\big|\ U \subset V\right\},\quad m \leq n.
\end{equation}
$\F_{m,n}^{M,N}$ has a canonical bundle with fiber $V/U$ over each point $(U,V)$,
denoted also by $V/U$, and
an associated determinant line bundle $\cL$. 
%
%
Given a symmetric polynomial, $f$, let
$c_f$ be the characteristic class such that $c$ is a homomorphism, and
\[c_{e_n} = c_n,\]
where $e_n$ is the elementary symmetric polynomial.
Define an inner-product by
\begin{equation}
(c_1,c_2)_{f,z}^{M,N} = \int_{\cF_{m,n}^{M,N}} p_1^*(c_1) p_2^*(c_2) \ch(\cL) c_f(V/U),
\end{equation}
\[c_1 \in \V'_{m,M,N},\quad c_2 \in \V'_{n,M,N},\quad
p_1 \times p_2 : \cF_{m,n}^{M,N} \rightarrow \cG_{m}^{M,N} \times \cG_{n}^{M,N}\]
\newline
for $m \leq n$.
This is an inner-product on $\V'_{m,M,N} \times \V'_{n,M,N}$ with 
values in $\C(z,t)$, where we have substituted $e^t=z$ in the Chern character.
We prefer to think of $z,t$ as independent variables. When $m=n$, this becomes the 
inner-product of the previous section.

Using the formula for the tangent bundle to the flag variety, we obtain the dual inner product in coordinates,
\begin{equation}
\nonumber
\left(v_{U},v_{V}\right)_{f,z}^{M,N} = 
\delta(U \subset V)\alpha(U,V^\perp) c_f(V/U) \ch(\det(V/U)).
\end{equation}
Normalizing as before,
\begin{equation}
\label{fn}
\left(v_{U},v_{V}\right)_{f,z} = \frac{c_{\mu,m,M}c'_{\nu,n,N}}
{c''_{m,n,M,N}}\left(v_{U},v_{V}\right)_{f,z}^{M,N},
\end{equation}
we obtain an inner-product on $\cV_{m} \times \cV_{n}$, given in coordinates by
\begin{equation}
\begin{split}
\left(v_{\mu,m},v_{\nu,n}\right)_{f,z}   & =  z^kt^{|\mu|+|\nu|} \delta(V_{\mu,m} \subset V_{\nu,n}) 
\prod_{\Box \in \mu} (a+a_\nu(\Box)+\ell_\mu(\Box)+1)\cdot \\
 & \prod_{\Box \in \nu} (a-a_\mu(\Box)-\ell_\nu(\Box)-1)c_f(V_{\nu,n}/V_{\mu,m}),
\end{split}
\label{W}
\end{equation}
\[a= n-m, \quad  k = |\nu|+\frac{n(n+1)}{2}-|\mu|-\frac{m(m+1)}{2}.\]
%
%
where $a_{\mu}$, $\ell_\mu$ are the (possibly negative) arm and leg-lengths in $\mu$.

If $n < m$, let
\begin{equation}
\nonumber
\left(u,v\right)_{f,z}^{M,N} = (-1)^{ma}\left(v,u\right)_{f,z^{-1}}^{M,N},\quad
\end{equation}
where $u$ and $v$ are homogeneous classes
of degrees $m$ and $n$, and
\begin{equation}
\left(Y(a,f,\pm z)\cdot u,v\right) = 
z^{-a(a+1)/2} \left(u,v\right)_{f,z}.
\end{equation}
The expression above vanishes if $a \neq n-m$, and $\pm$ means $sgn(a)$.
The signs are designed so that we always arrive at the expression in \eqref{W}, which is defined for all $a$.

The coefficient of $z^k$ of $Y(a,f,z)$ is an honest operator on $\V = \bigoplus_m \V_m$,
though not for any particular value of $z$.
Instead, $Y(a,f,z)$ is a formal power series with coefficients in $\End(\V)$,
\[Y(a,f,z) \in \End(\V)[[z^{\pm 1}]],\quad \V = \bigoplus_{\mu,m} \C(t) \cdot v_{\mu,m},\]
which is sometimes called a field.

\section{Locality}

The commutation relations between the vertex operators
are collectively called locality, and are described
by the supercommutator,
\begin{equation}
\begin{split}
\{& Y(a,f,z),Y(b,g,w)\} = \\ 
Y(a,f,z)& Y(b,g,w)- (-1)^{ab} Y(b,g,w) Y(a,f,z).
\end{split}
\end{equation}
We deduce the commutation relations among the $Y(a,f,z)$ using
the finite-dimensional approximations.
\begin{lemma}
Let $U \in \left(\cG_{l}^{M,N}\right)^T$, $W \in \left(\cG_{n}^{M,N}\right)^T$, 
$f=e_\kappa$, $g=e_{\kappa'}$, where $e_\kappa = \prod_i e_{\kappa_i}$, and let
%
%
%
%
\[A = \sum_{V \in \left(\cG_{m}^{M,N}\right)^T} \frac{(v_U,v_V)_{f,z}^{M,N} (v_W,v_V)_{g,w}^{M,N}}{(v_V,v_V)^{M,N}},\]
\[A' = \sum_{V \in \left(\cG_{m'}^{M,N}\right)^T} \frac{(v_U,v_V)_{g,w}^{M,N} (v_V,v_W)_{f,z}^{M,N}}{(v_V,v_V)^{M,N}},\quad
B = A-(-1)^{ab}A',\]
where $m' = l+n-m$, $a=m-l$, $b=n-m$.
\begin{description}
 \item[a.] If $a,b \geq 0$, or $a,b \leq 0$, then $B = 0$.

\item[b.] If $a>0$, $b<0$, then
\begin{equation}
\label{h}
B = h_1(z,w)(z+w)^{K_1}+ \cdots + h_{d}(z,w)(z+w)^{K_{d}},
\end{equation}
where 
\[d = m-\dim(Y),\quad K_j = j(\dim(Z)+j-2d-\ell(\kappa)-\ell(\kappa')), \vspace{.25cm}\]
\[X = U \cap V,\quad Y = U + V,\quad Z = X \oplus Y^{\perp}.\]
%
%
%
%
%
\end{description}
\end{lemma}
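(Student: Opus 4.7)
The key observation is that each summand in $A$ and $A'$ is in Atiyah--Bott localization form: the denominator $(v_V, v_V)^{M,N}$ equals the Euler class $e(T_V \cG_m^{M,N})$, while each numerator factor $(v_U, v_V)_{f,z}^{M,N}$ is the restriction to $V$ of a pushforward class
\[ \alpha_U^{(f,z)} := (p_2)_*\bigl(p_1^*v_U \cdot \ch(\cL) \cdot c_f(V/U_0)\bigr) \in H^*_T(\cG_m^{M,N}) \]
from the two-step flag variety $\cF_{l,m}^{M,N}$. My first move is therefore to rewrite both sums as honest equivariant integrals, namely
\[ A = \int_{\cG_m^{M,N}} \alpha_U^{(f,z)} \cdot \alpha_W^{(g,w)}, \]
and, by the projection formula, as integrals over the appropriate fiber products $\cF^A$ and $\cF^{A'}$ of the two flag varieties. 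Whenever the ``swap'' convention is invoked (inner product with smaller-degree argument second), the formal variable is inverted and a combinatorial sign is picked up.

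For part (a), both $\cF^A$ and $\cF^{A'}$ parametrize configurations of the same global shape — three-step flags $U \subset V_? \subset W$ when $a, b \geq 0$, or the opposite configuration $V_? \subset U,\, V_? \subset W$ when $a, b \leq 0$ — differing only in the intermediate dimension. The plan is to pull both integrands back to the ``double'' variety parametrizing pairs $(V, V')$ with $U$ and $W$ at the fixed endpoints, observe that the two classes coincide up to a reordering of $c_f, c_g$ together with their determinant Chern characters, read off the resulting Koszul sign $(-1)^{ab}$, and integrate out the auxiliary direction to conclude $A = (-1)^{ab} A'$, i.e., $B = 0$.

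For part (b), the geometries are genuinely different: $\cF^A$ sits ``above'' $Y = U+W$ while $\cF^{A'}$ sits ``below'' $X = U \cap W$, and no common flag links them. After fixing $U, W$, the variety $\cF^A$ becomes $\Gr(d, \cH^{M,N}/Y)$ parametrizing the $d$-dimensional complement of $Y$ inside $V$, and $\cF^{A'}$ becomes a Grassmannian of matching rank inside $X$. The integrands reduce, via the canonical splitting $Z = X \oplus Y^\perp$, to polynomials in $z, w$ whose coefficients are tautological classes on these Grassmannians. I would then verify that $B$ vanishes on the locus $w = -z$ to successive orders $K_j$ (for $j = 1, \ldots, d$) through a Schubert-type stratification of configurations by the intersection dimension with a fixed generic subspace of $Z$; the $j$-th stratum contributes the factor $(z+w)^{K_j}$ with exponent determined by balancing $\dim Z$, the free rank $d$, and the degrees $\ell(\kappa), \ell(\kappa')$ of the symmetric functions $e_\kappa, e_{\kappa'}$.

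The principal obstacle is pinning down this exponent. The quadratic-in-$j$ structure $K_j = j(\dim Z + j - 2d - \ell(\kappa) - \ell(\kappa'))$ reflects a diagonal self-intersection of the Schubert strata which has to be computed carefully, most naturally via a Bott-residue calculation on the double Grassmannian $\Gr(d, \cH^{M,N}/Y) \times \Gr(d, X)$, or equivalently via an explicit Koszul-style resolution identifying $A - (-1)^{ab} A'$ with a sum of tautological classes whose vanishing orders on $(z+w) = 0$ are precisely the $K_j$. The localization rewriting in step (1) and the symmetry argument for case (a) should both be essentially formal once the setup is in place.
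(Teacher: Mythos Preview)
Your outline for part (a) is in the right spirit but the ``double variety'' of pairs $(V,V')$ is not the mechanism the paper uses, and as stated it is unclear how integrating over such pairs compares $A$ with $A'$ rather than producing their product. The paper instead observes that after factoring out $\alpha(U,W^\perp)$, $A$ becomes a localization integral over $\Gr(a,W/U)$ and $A'$ over $\Gr(b,W/U)$; the orthogonal complement map $V \mapsto V^\perp$ inside $W/U$ is a $T$-equivariant diffeomorphism of real manifolds of degree $(-1)^{ab}$ that literally swaps the two integrands. That is where the sign comes from, not from a Koszul rule on a fiber product.

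For part (b) there is a genuine gap. Your proposed Schubert stratification by intersection dimension with a generic subspace of $Z$ is speculative, and you yourself flag that pinning down the exponent $K_j$ is the ``principal obstacle.'' The paper's argument is entirely different and does not use stratification at all. It first factors $B = B_0 \cdot H(Y^\perp,X,c_{fg},d)$ where $H$ is a difference of two explicit localization sums $F$. The key step is the base case $X=\{0\}$: there $H = F(Y^\perp,\{0\},c_f,d)$ has the form $h(z)/t^{K_d}$, but at $z=e^t$ it is the localization expression for an honest integral over the \emph{compact} Grassmannian $\Gr(d,Y^\perp)$, hence must lie in $\C[[t]]$. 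This regularity forces $h(z)$ to vanish to order $K_d$ at $z=1$. The general case is then handled by an explicit recursion that moves one weight $\C_k$ from $Y^\perp$ into $X$, reducing $d$ by one and producing the successive exponents $K_j$. Without this regularity-of-compact-localization argument (or an equivalent replacement), your approach does not establish the claimed vanishing orders.
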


\begin{proof}

Assume for simplicity that $w=-1$.
To prove the first part, suppose $a,b \geq 0$. Then
\[A = \sum_{V \in \left(\cG_{m}^{M,N}\right)^T} \delta(U \subset V)\delta(V \subset W)
\frac{\alpha(U,V^\perp)\alpha(V,W^\perp)}{\alpha(V,V^\perp)}\cdot\]
\[\ch(\det(V/U)) c_f(V/U) c_{g}(W/V) =\]

\[\alpha(U,W^\perp)\int_{\Gr(a,W/U)} \ch(\det(V))
c_f(V)c_{g}(V^\perp),\]
where in the second line, $V$ is the universal bundle on the Grassmannian, and 
$V^\perp$ is perpendicular within $W/U$. Similarly
\[A' = \alpha(U,W^\perp) \int_{\Gr(b,W/U)} \ch(\det(V^\perp))
c_{g}(V)c_{f}(V^\perp).\]

The map 
\[\perp:\Gr(a,W/U) \rightarrow \Gr(b,W/U)\]
is a $T$-equivariant isomorphism of real manifolds of degree $(-1)^{ab}$, which
interchanges the integrands, proving that $A=(-1)^{ab}A'$. The case $a,b < 0$ is similar.

For the second part, let $\dim(X)=p$, $\dim(Y)=q$. 
\[A = (-1)^{la}\sum_{V \in \left(\cG_{m}^{M,N}\right)^T} \ch(\det(V/U))
c_f(V/U)c_{g}(V/W) \cdot\]
\[\delta(V \supset Y)\frac{\alpha(U,V^\perp)\alpha(W,V^\perp)}{\alpha(V,V^\perp)} = \]

\[(-1)^{la+pd}\ch(\det(Y/U))
\alpha(X,Y^\perp)c_f(Y/U) c_g(Y/W)\cdot\]

\[ \sum_{V \in \left(\cG_{m}^{M,N}\right)^T} \delta(V \supset Y) 
\frac{c_{fg}(V/Y)\ch(\det(V/Y))}{\alpha(V/Y,X \oplus V^\perp)} = \]

\[B_0\cdot F(Y^\perp,X,c_{fg},d)\]
where
\begin{equation}
F(Y^\perp,X,c_f,d) = \sum_{V \in \Gr(d,Y^\perp)^T} \frac{\ch(V)c_f(V)}
{ \alpha(V,X\oplus Y^\perp \ominus V)}.
\label{Fterm}
\end{equation}
In the second line, we used the fact that
\[\alpha(X,V/Y) = (-1)^{pd}\alpha(V/Y,X).\]
Doing a similar calculation for $A'$, we find that
\[B = B_0 \cdot H(Y^{\perp},X,c_{fg},d),\]
where
\[H(Y^\perp,X,c_f,d) = F(Y^\perp,X,c_f,d)-(-1)^{d} F(X,Y^\perp,c_f,d).\]
We must prove that this has the form \eqref{h}.

Consider first the base case $X=\{0\}$. Then
\[H(Y^\perp,\{0\},c_f,d) = F(Y^\perp,\{0\},c_f,d) = \frac{h(z)}{t^{K_d}}.\]
%
At $z=e^t$, \eqref{Fterm} represents a localization integral over a compact the Grassmannian,
so it be an element $\C[[t]]$, the closure of $H^*_T(pt)$. 
Therefore $h(z)$ vanishes at $z=1$ to degree $K_d$.
Now suppose $c_f(V) = c_S(V) = \alpha(V,S)$ where $S$ is a representation of the circle.
The remaining cases follow from the recursion relation

\[H(Y^\perp,X,c_S,d)-(-1)^{d} H(Y^\perp \ominus \C_k, X \oplus \C_k,c_S,d) = \]

\begin{equation}
\frac{\alpha(\C_k,S) z^k}{\alpha(X \oplus Y^\perp \ominus \C_k,\C_k)}
H(Y^\perp \ominus \C_k,X,c_{S\oplus \C_k},d-1),
\vspace{.25cm}
\end{equation}
which is straightforward to verify.

\end{proof}

\begin{thm} 
If $a,b\geq0$, or $a,b \leq 0$, then $\{Y(a,f,z),Y(b,g,w)\}=0$.
Otherwise, there exists $K>0$ such that
\[(z-w)^K\{Y(a,f,z),Y(b,g,w)\} = 0.\]%
\label{locality}
\end{thm}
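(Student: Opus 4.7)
The plan is to read off the matrix coefficients of $\{Y(a,f,z), Y(b,g,w)\}$ in the localization basis directly from the preceding lemma, and then conclude locality via the standard formal-distribution mechanism. Fix torus-fixed subspaces $U \in (\cG_l^{M,N})^T$ and $W \in (\cG_n^{M,N})^T$; inserting the orthogonality resolution $\sum_V v_V \otimes v_V / (v_V, v_V)^{M,N}$ over intermediate fixed subspaces $V \in (\cG_m^{M,N})^T$ into the definitions \eqref{fn} and \eqref{W}, the matrix coefficient of $Y(a,f,z) Y(b,g,w)$ between $v_U$ and $v_W$ is exactly the sum $A$ of the lemma (with $a = m - l$, $b = n - m$), and that of $Y(b,g,w) Y(a,f,z)$ is $A'$. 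So the matrix coefficient of the supercommutator is $B = A - (-1)^{ab} A'$, and passing to $M, N \to \infty$ yields matrix coefficients in the limiting space $\cV$.

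For the first statement of the theorem, Lemma (a) gives $B \equiv 0$ at every finite approximation whenever $a$ and $b$ share a sign. Every matrix coefficient of the supercommutator then vanishes, and the supercommutator is the zero operator on $\cV$.

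For the second statement, Lemma (b) writes $B$ as a finite $\C(z,w,t)$-linear combination of integer powers $(z+w)^{K_j}$. The two orderings $Y(a,f,z) Y(b,g,w)$ and $Y(b,g,w) Y(a,f,z)$ arise as formal expansions, in the regions $|z| > |w|$ and $|w| > |z|$ respectively, of a common rational function in $(z,w)$ whose only singularities lie on the locus $z + w = 0$. Their formal difference is supported on this locus and hence annihilated by $(z+w)^K$ once $K$ exceeds the maximal pole order. Under the convention $z = e^t$, $w = e^s$ used for $\ch(\cL)$ in \eqref{fn}, this $(z+w)^K$ translates to the theorem's $(z-w)^K$ after the sign substitution $w \mapsto -w$.

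The main obstacle is uniformity of $K$: a single bound must work for every pair of basis vectors, so that $(z-w)^K$ kills the supercommutator as an element of $\End(\cV)[[z^{\pm 1}, w^{\pm 1}]]$, rather than merely each matrix coefficient separately. Using the explicit formula $K_j = j(\dim(Z) + j - 2d - \ell(\kappa) - \ell(\kappa'))$, with $\ell(\kappa)$ and $\ell(\kappa')$ fixed by the operators, and noting that the normalization factors in \eqref{fn} absorb the cutoff contributions to $\dim(Z)$ and $d$, one verifies that $\min_j K_j$ is bounded below by a constant depending only on $(a, f, b, g)$, supplying the required uniform $K$.
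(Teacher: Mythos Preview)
Your argument for the first statement is essentially the paper's, modulo the normalization constants (the paper notes that $c_{\mu,m,N}/N^{|\mu|}\to 1$ to pass from $B=0$ at each cutoff to vanishing in the limit).

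For the second statement, there is a genuine gap: you misread what the lemma is saying. At each finite cutoff $(M,N)$, the expression $B$ is a Laurent \emph{polynomial} in $z,w$, and the lemma asserts it has the shape $\sum_j h_j(z,w)(z+w)^{K_j}$ with $K_j = j(\dim(Z)+j-2d-\ell(\kappa)-\ell(\kappa'))$. Since $\dim(Z)$ grows linearly with $N$, the $K_j$ are large \emph{positive} integers, growing without bound as $N\to\infty$. There is no pole at $z+w=0$; there is a high-order zero. So your picture of ``a common rational function with singularities only on $z+w=0$'' whose two expansions give the two orderings is not what is happening here, and the sentence ``once $K$ exceeds the maximal pole order'' has no content.

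The actual mechanism, which the paper carries out, is a limit argument: after normalization and setting $M=-N$, one is looking at finite sums of the form
\[
h'_{1,N}(z)\,z^{-N}(1+z)^{2N+c_1}+\cdots+h'_{d,N}(z)\,z^{-dN}(1+z)^{2dN+c_d},
\]
and the point is that each factor $z^{-jN}(1+z)^{2jN+c_j}$ converges, as a formal distribution in $z$, to a finite linear combination of derivatives of $\delta(z-1)$. The paper controls this by first observing that $\dim(Y/X)\le |a-b|$ forces $d$ to be bounded independently of $U,W,N$, and then checking that the upper and lower degrees of $h'_{j,N}$ stay bounded; this is what produces a uniform $K$ with $(z-1)^K$ annihilating the limit. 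Your uniformity paragraph instead tries to bound $\min_j K_j$ from below, which is the wrong quantity (it tends to $+\infty$) and would not yield the conclusion even if it were finite.
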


\begin{proof}

For simplicity, we may assume $w=1$.
We substitute $M=-N$, and express the commutator as a
limit as $N$ approaches $\infty$. Since
\[\lim_{N\rightarrow \infty} \frac{c_{\mu,m,N}}{N^{|\mu|}} = 
\lim_{N\rightarrow \infty} \frac{c'_{\mu,m,-N}}{N^{|\mu|}} = 1,\]
the first part follows immediately from the lemma.

For the second, we see that
\[\left(\{Y(a,f,z),Y(b,g,1)\}\cdot v_{\mu,m},v_{\nu,n}\right) = \lim_{N \rightarrow \infty} C_N\cdot H(Y^\perp,X,c_{fg},d),\]
where $X$ and $Y$ are as above, with $U = V^{-N,N}_{\mu,m}$, $W=V^{-N,N}_{\nu,n}$.
This vanishes if $\dim(Y/X) > |a-b|$. It follows from this fact and the lemma
that there are overall bounds on the upper and lower degrees of $z^{jN}h_j(z,1)$ for all $j,\mu,m,\nu,n$.
It is straightforward to check that if
\[h'_{1,N}(z)z^{-N}(1+z)^{2N+c_1} +\cdots +h'_{d,N}(z)z^{-dN}(1+z)^{2dN+c_d}\]
converges to $h'(z)$ as a formal distribution, and the upper and lower degrees of $h'_{j,N}(z)$ are bounded, then
$h'(z)$ is a linear combination of derivatives of the delta function, so that
\[(z-1)^K h(z)' = 0,\]
for some overall $K$, independent of $U,W$.
\end{proof}

\section{Bosonization and the Virasoro Algebra}
We now define the Heisenberg and Virasoro actions on $\cV_m$,
and use the previous section to obtain the intertwining properties of
$Y(a,z) = Y(a,c_0,z)$. As a result, we obtain a simple proof of the boson-fermion correspondence.

The recursion relation shows that when $a=-1$, $b>0$, $f=g=1$, $t=1$,
\begin{equation}
B = \alpha(U,W^\perp)\prod_{k \in W/U}(k+z\partial_z)\cdot \frac{z^Mw^{-(N-1)}(z+w)^{N-M-1}}{(N-M-1)!}.
\label{fermicom}
\end{equation}
The expression on the right hand side comes from the formula
\[H(Y^\perp,X,c_0,1) = H(Y^\perp \oplus X,\{0\},c_0,1) = \int_{\mathbb{P}(Y/X)} \ch(\cL)\]
where $\cL$ is the universal bundle on $\mathbb{P}(Y/X)$.
Taking the limit normalized as in \eqref{fn} at $b=1$, we get the Clifford algebra relations
\begin{equation}
\left\{\psi(z),\psi^*(w)\right\} = \delta(z-w),
\label{clifford}
\end{equation}
where
\[\psi(z) = Y(1,z),\quad \psi^*(z) = Y(-1,z),\quad \delta(z-w) = \sum_j z^jw^{-j-1}.\]

Let
\[\alpha_0 = \sum_{j>0} \psi_j \psi_j^*-\sum_{j \leq 0} \psi^*_j\psi_j,\quad
\alpha_n = \sum_j \psi_{j-n}\psi_{j}^*,\quad n \neq 0\]
where $\psi_j$ and $\psi_j^*$ are defined by
\[\psi(z) = \sum_j \psi_jz^{-j-1},\quad \psi^*(z) = \sum_j \psi^*_j z^{-j}.\]
It is easy to see that these operators act by ``wedging and contracting''
in the basis \eqref{ebasis}, and that
$\alpha_0$ is multiplication by $m$ on $\cV_m$.
By \eqref{clifford}, these operators satisfy the Heisenberg commutation relations
\[[\alpha_i,\alpha_j] = i\delta_{i,-j}\]
We also define the Virasoro generators
\begin{equation} 
L_n = \sum_k k\psi_k \psi^*_{k+n}.
\label{Ln}
\end{equation}

\begin{thm}
With the above notations,
\begin{enumerate}[(1)]
\item \label{bosonization}
\begin{equation} \nonumber
Y(a,z) =
Q^az^{a\alpha_0} \exp\left(a\sum_{n>0} \frac{\alpha_{-n}z^n}{n}\right)
\exp\left(-a\sum_{n>0} \frac{\alpha_{n}z^{-n}}{n}\right),
\end{equation}
where $Q\cdot v_{\mu,m} = v_{\mu,m+1}$.
\item \label{conformal}
\begin{equation}\nonumber
[L_n,Y(a,z)] = \left(z^{n+1}\partial_z+\frac{a(a-1)}{2}(n+1)z^n\right)\cdot Y(a,z).
\end{equation}
\item \label{vertexalgebra}
$\cV$ is a conformal vertex algebra,
isomorphic to the free fermionic vertex superalgebra.
Under this isomorphism, part (\ref{bosonization}) is the boson-fermion correspondence (see \cite{K,FB}).
\end{enumerate}
\end{thm}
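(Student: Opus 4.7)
The plan is to take the vertex superalgebra structure of part (\ref{vertexalgebra}) as the organizing principle, then derive parts (\ref{bosonization}) and (\ref{conformal}) as consequences of reconstruction, standard bosonization, and the Sugawara dictionary.

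For part (\ref{vertexalgebra}), all hypotheses of the reconstruction theorem for vertex superalgebras (\cite{FB}, Theorem 4.4.1, or \cite{K}) are already in hand: the fields $\psi(z) = Y(1,z)$ and $\psi^*(z) = Y(-1,z)$ are mutually local by Theorem \ref{locality}, satisfy the Clifford relations \eqref{clifford}, and act on $\cV$ with distinguished vacuum $v_{\emptyset,0}\in\cV_0$. The modes $\psi_j,\psi_j^*$ act by wedging and contracting in the orthonormal basis \eqref{ebasis}, so the vacuum is cyclic for them and the full basis $\{v_{\mu,m}\}$ is recovered from it. Reconstruction then produces a vertex superalgebra structure on $\cV$, and the uniqueness of the irreducible Clifford representation with a cyclic vacuum identifies this with the free fermionic vertex superalgebra.

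For part (\ref{bosonization}), I would first establish the Heisenberg commutation
\[
[\alpha_n, Y(a,z)] = a z^n Y(a,z).
\]
Writing $\alpha_n$ as a normal-ordered sum of fermion bilinears and applying locality reduces this to an OPE coefficient, computable via the recursion from the locality lemma specialized at $a=\pm 1$ (the formula \eqref{fermicom} already exhibits the relevant pole structure). Combined with the observation that $Y(a,z)$ shifts charge by $a$ (evident from the definition of $\F_{m,n}^{M,N}$), this commutation forces the bosonization formula up to an overall scalar and the zero-mode factor $Q^a z^{a\alpha_0}$. The remaining normalizations are pinned down by evaluating both sides on $v_{\emptyset,0}$ using \eqref{W} and comparing with the explicit expansion of the exponentials on the right hand side.

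For part (\ref{conformal}), I would first rewrite $L_n$ via the Sugawara identity in terms of Heisenberg generators, which is a formal consequence of the definition \eqref{Ln} once the boson-fermion dictionary of part (\ref{bosonization}) is in place. The commutator $[L_n,Y(a,z)]$ then reduces to computing $[L_n,\alpha_m]$ and applying it term by term to each factor in the bosonized form of $Y(a,z)$; the derivative $z^{n+1}\partial_z Y(a,z)$ comes from the exponential factors and the conformal-weight term $\tfrac{a(a-1)}{2}(n+1)z^n$ from the zero-mode piece $Q^a z^{a\alpha_0}$. The main obstacle will be the Heisenberg commutation for general $a$: the $a=\pm 1$ cases are essentially the Clifford relations, but for larger $|a|$ a direct check would require combinatorial identities relating \eqref{W} (arm- and leg-lengths over pairs $V_{\mu,m}\subset V_{\nu,n}$) to iterated OPE expansions of $\psi$. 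The cleanest bypass is to establish part (\ref{vertexalgebra}) first, identify $Y(a,z)$ with the vertex operator obtained from iterated OPE of $a$ copies of $\psi$ (or $\psi^*$), and then match normalizations once on the vacuum; uniqueness inside the free fermionic VOA then forces the combinatorics of \eqref{W} to agree automatically.
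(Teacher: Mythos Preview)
Your proposal is essentially correct but takes a route genuinely different from the paper's. The paper proves part~(\ref{bosonization}) first by a direct geometric computation: it writes $[\alpha(z),Y(a,w)]=\psi(z)\{\psi^*(z),Y(a,w)\}$, inserts the explicit formula~\eqref{fermicom} (which already covers general $a>0$, since that formula is the $(-1,b)$ case of the lemma with $b$ arbitrary), and recognizes the resulting sum as the localization expression for the Euler characteristic of $\mathbb{P}(W/U)$, which equals $a$ identically. Part~(\ref{conformal}) is obtained the same way, replacing the Euler characteristic by $\int_{\mathbb{P}(W/U)} c_1(\cL)\sum_i c_i(T\mathbb{P})(-n)^{a-i}$, which yields $c_1(W/U)+\tfrac{a(a-1)}{2}n$. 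Only then does the paper invoke reconstruction for~(\ref{vertexalgebra}), using the $n=-1$ case of~(\ref{conformal}) as the translation axiom. Your ordering is reversed: build the fermionic VOA from $\psi,\psi^*$ alone, then identify the geometrically defined $Y(a,z)$ with a VOA field by Goddard uniqueness (locality from Theorem~\ref{locality}, regularity and vacuum value from a single evaluation of~\eqref{W} at $\mu=\emptyset$), and finally read off~(\ref{bosonization}) and~(\ref{conformal}) from standard VOA facts. This works, but the step you flag as the ``main obstacle''---the Heisenberg commutation for general $a$---is precisely what the paper resolves cleanly and geometrically, and that resolution is in some sense the point of the paper: the identity $[\alpha_n,Y(a,z)]=az^nY(a,z)$ is the limit of an Euler-characteristic computation. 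Your bypass trades that geometric content for abstract VOA machinery. One caution on your Sugawara argument for~(\ref{conformal}): the $L_n$ defined in~\eqref{Ln} differs from the bosonic Sugawara $\tfrac12\sum:\alpha_j\alpha_{n-j}:$ by a background-charge term proportional to $(n+1)\alpha_n$, which is exactly what shifts the conformal weight from $a^2/2$ to $a(a-1)/2$; you should make this explicit rather than attributing the shift to $Q^a z^{a\alpha_0}$ alone.
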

\begin{proof}

Let us compute the commutator
\[\left[\alpha(z),Y(a,w)\right] = \psi(z)\left\{\psi^*(z),Y(a,w)\right\}.\]
As in theorem \ref{locality},
we deduce the expression on the right hand side by taking a limit of the matrix elements
\begin{equation}
\begin{split}
\sum_{V_0 \in \cG_{m_0}}\frac{(U,V_0)_z^{M,N}}{(V_0,V_0)^{M,N}} 
& \left(\sum_{V_1 \in \left(\cG_{m_1}^{M,N}\right)^T} 
\frac{(V_0,V_1)^{M,N}_z(V_1,W)^{M,N}_w}{(V_1,V_1)^{M,N}} - \right. \\
(-1)^a & \left. \sum_{V_1'\in\left(\cG_{m_1'}^{M,N}\right)^T}
\frac{(V_0,V_1')^{M,N}_w(V_1',W)^{M,N}_z}{\omega^{M,N}(V_1',V_1')}\right),
\end{split}
\nonumber
\end{equation}
where
\[U \in \left(\cG_{l}^{M,N}\right)^T,\quad W \in \left(\cG_{n}^{M,N}\right)^T,\]
\[m_0 = l+1,\quad m_1 = l,\quad m_1' = n+1,\quad a=n-l.\]
Inserting \eqref{fermicom}, setting $M=-N$, and taking
the normalized limit as $N \rightarrow \infty$ gives
\[\alpha(U,W^\perp)\sum_{V \in \mathbb{P}(W/U)^T} \frac{\ch_z(V)\ch_w(V^\perp)}{\alpha(V,V^\perp)}
\prod_{k' \in V^\perp} (k'+z\partial_z)\cdot \delta(zw^{-1}-1).\]
Extracting the coefficient of $z^n$, we get
\[w^n\alpha(U,W^\perp)\ch_w(W/U)\sum_{k}
\prod_{k' \neq k}\frac{k'-k-n}{k'-k},\]
where $k,k'$ are the torus weights of $W/U$. The sum is the localization expression
for the Euler characteristic of $\mathbb{P}(W/U)$ which equals $a$ for all $n,U,W$. 
The case $a < 0$ is similar with $\cG r(-a-1,W/U)$ replacing projective space.
It follows that
\[\left[\alpha_n,Y(a,z)\right] = az^n Y(a,z).\]
This determines $Y(a,z)$, and in fact it equals the expression in part (\ref{bosonization}).

Part (\ref{conformal}) is similar, but uses the alternate formula
\[\sum_{k} k\prod_{k' \neq k}\frac{k'-k-n}{k'-k} = \int_{\mathbb{P}(W/U)} c_1(\cL) \sum_i c_i(T \mathbb{P}(W/U))(-n)^{a-i} = \]
\[c_1(W/U)+\frac{a(a-1)}{2}n.\]
In particular,
\begin{equation} [T,Y(a,z)] = \partial_z Y(a,z), \label{Lminusone} \end{equation}
where $T = L_{-1}$ is the time translation operator. Therefore part (\ref{vertexalgebra}) follows
from theorem \ref{locality} and the reconstruction theorem \cite{FB}.

\end{proof}

It would be interesting if the additional operators $Y(a,f,z)$ could be included the vertex algebra. 
Unfortunately, while they are mutually local, they do not satisfy \eqref{Lminusone}, and so
the reconstruction theorem does not apply.

\section{The Hilbert Scheme} 

Let $S$ be a smooth surface with an action of a torus, $T$.
Then $\Hilb_k S$ parametrizes subschemes $Z \subset S$ with $\dim_\C H^0(\cO_Z) = n$,
or equivalently, their ideal sheaves
\[I_Z = \ker (\cO_S \rightarrow \cO_Z).\]
By definition, there is a universal subscheme,
\[\fZ \subset \Hilb S \times S,\]
and its corresponding ideal sheaf $\cI$. Let
\[\cU'_S = \bigoplus_k H^*_T(\Hilb_k S) \otimes_{\C[\lt^*]} \C(\lt^*),\]
and let $\cU$ denote its dual vector space over $\C(\lt^*)$.

In \cite{CO}, Okounkov and the author defined a class
\[\bE = {p_{12}}_*\left(\left(
\cO_{\fZ_1}^\vee+\cO_{\fZ_2}-\cO_{\fZ_1}^{\vee}\cdot\cO_{\fZ_2}\right)\cdot
p^*_{3}(\cL)\right) \in K_T(\Hilb_k S \times \Hilb_l S),\]
where $\cL$ is a line bundle on $S$, and $\cO_{\fZ_i} = p_{i3}^* \cO_{\fZ}$.
This is a virtual vector bundle of rank $k+l$, and so we may consider its Euler class,
\[c_{k+l}\left(\bE(\cL)\right) \in H^*_T(\Hilb_k \times \Hilb_l) \subset \cU'_S \otimes \cU_S'.\]
If $S$ has compact fixed loci under $T$, then the vector space
$\cU'_S$ has an inner-product over $\C(\lt^*)$ given as above.
The above characteristic class defines an operator
\[\bW(\cL,z) \in \End(\cU_S')[[z,z^{-1}]],\]
given by%
\begin{equation}
\left(\bW(\cL)\cdot \eta, \xi\right) = z^{l-k}
\int_{\Hilb_k \times \Hilb_l} p_1^*(\eta) \cup p_2^*(\xi)\cup e(\bE)
\label{bW}
\end{equation}
which we gave a formula for in terms of Nakajima's Heisenberg operators,
and the first Chern class of the canonical bundle of $S$.

An important case is $S=\C^2$ with the torus action
\[(z_1,z_2)\cdot (x,y) = (z_1x,z_2y),\]
and where $\cL$ is
a character of $T$ viewed as an equivariant bundle on $\C^2$. 
In this case the fixed-points of the action of $T$ on $\Hilb_k \C^2$ correspond to the monomial ideals
\begin{equation}
I_\mu = (x^{\mu_1},x^{\mu_2}y,x^{\mu_3}y^2,...,y^{\ell{\mu}}),
\end{equation}
where $\mu$ is a partition. By the localization formula,
\[\cU \cong \bigoplus_{\mu} \C(t_1,t_2)\cdot u_\mu,\]
where $u_\mu$ is the pullback of the inclusion of $I_\mu$, and $\cU$ is the dual of $\cU'$ over $\C(t)$.
For a reference on localization and the equivariant cohomology of the Hilbert scheme, see \cite{V,LQW_Jack}.

Let us calculate the matrix elements of $\bW(\cL)$ in this basis.
To do this we need the character of the fiber of the bundle $\bE(\cL)$, over a pair of
fixed-points $(I_\mu,I_\nu) \in \Hilb_k \C^2\times \Hilb_l \C^2$.
Since
\[\ch H^0(\cO_Z) = \ch H^0(\cO_{\C^2})-\ch H^0(I_Z) \in \C(z_1^{-1},z_2^{-1}),\]
we find that
\begin{equation}
\ch \bE(\cL)\big|_{(I_\mu,I_\nu)} = 
\left(\ch\chi(\cO,\cO)- \ch \chi(I_\mu,I_\nu)\right)\ch(\cL),
\label{Ediff}
\end{equation}
where
\[\chi_S(F,G) = \Ext^0_S(F,G)-\Ext^1_S(F,G)+\Ext_S^2(F,G),\]
the derived push-forward of $Hom_S(F,G)$ to a point.
By Riemann-Roch,
\[\ch \chi_{\C^2} (I_\mu,I_\nu) = e(\C^2)\td (\C^2)^{-1}
\ch H^0(I_\mu^\vee)\ch H^0(I_\nu)  = \vspace{.25cm}\]
\begin{equation}
(1-z_1)(1-z_2)\overline{\ch H^0(I_\mu)}\ch H^0(I_\nu)
\in \C((z_1^{-1},z_2^{-1}))
\label{chi}
\end{equation}
where $F^\vee$ is the dual in equivariant $K$-theory, $\td$ is the Todd class,
\[\ch H^0(I_\mu) = \sum_{i,j \geq 1,\ (i,j) \notin \mu} z_1^{1-i}z_2^{1-j},\quad
e(\C^2)\td (\C^2)^{-1} = (1-z_1^{-1})(1-z_2^{-1}),\]
\[\ch(H^0(F^\vee)) = \overline{z_1^{-1}z_2^{-1} \ch(H^0(F))}, \quad \overline{f(z_1,z_2)} = f(z_1^{-1},z_2^{-1}).\]
%
The order of the multiplication matters in \eqref{chi}. For instance, the product of the last two terms
is divergent. On the other hand, multiplication by $(1-z_1)(1-z_2)$
of either term yields an element of $\C(z_1,z_2)$. We have assumed that multiplication happens
from left to right, producing an element of $\C((z_1^{-1},z_2^{-1}))$.

However, choosing a different order of multiplication does not affect the difference \eqref{Ediff},
so long as we choose the order consistently for each term. We may therefore write
\[\ch \bE(\cL)\big|_{(I_\mu,I_\nu)} = 
\left(\overline{\ch H^0(I_{\emptyset})(1-z_2^{-1})}\ch H^0(I_{\emptyset})(1-z_1) \right. -\]
\[\left. \overline{\ch H^0(I_{\mu})(1-z_2^{-1})}\ch H^0(I_{\nu})(1-z_1) \right) \ch \cL =\vspace{.25cm}\]

\[\left(\overline{\sum_{i} z_1^{1-i}z_2^{-\mu^t_i}} \sum_{i} z_1^{1-\nu_i}z_2^{1-i}-
\overline{\sum_{i} z_1^{1-i}} \sum_{i} z_1z_2^{1-i}\right)\ch \cL \vspace{.25cm}=\]
%
\begin{equation}
\left(\sum_{\square\in \mu} z_1^{a_\mu(\square)+1} \, z_2^{-l_\nu(\square)}+
\sum_{\square\in \nu} z_1^{-a_\nu(\square)}\, z_2^{l_\mu(\square)+1}\right)\ch \cL,
\label{Echar}
\end{equation}
where $\mu^t$ is the transposed partition.
Furthermore, the restriction of $\bE = \bE(\cO)$ along the diagonal is the tangent bundle of the Hilbert scheme,
and the above expression reduces to the well-known formulas for its character.
It follows that in the dual coordinates,
\[\left(\bW(\cL) \cdot u_\mu,u_\nu\right) = z^{|\nu|-|\mu|}t^{|\mu|+|\nu|}\prod_{\square \in \mu} 
\left(c_1(\cL)+a_\mu(\square)t_1-\ell_\nu(\square)t_2+t_1\right)\cdot\]
\begin{equation}
\prod_{\square \in \nu} \left(c_1(\cL)-a_\nu(\square)t_1+\ell_\mu(\square)t_2+t_2\right).
\label{Whooks}
\end{equation}

We may now explain how this relates to our results in this paper.
Let $\cU = \cU_{\C^2}$, with the simpler torus action \eqref{T}.
This amounts to setting
\[(t_1,t_2)=(t,-t),\quad c_1(\cL) = at,\quad a \in \Z\] 
in \eqref{Whooks}. Comparing with \eqref{W},
the two operators are identical, with the partitions transposed. 
We now explain why.

Given a vector space $U \subset \C(x,y)$ and an integer $N$, let
\begin{equation}
U_{N} = U \cap \spn\left\{x^iy^j\ |\ i,j < N\right\} \subset \C(x,y).
\end{equation}
For each $k,m$, and large $N$, there is a $T$-equivariant subbundle 
of the trivial bundle $\C(x,y)$ on $\Hilb_k \C^2$ whose fiber is given by
\[\cI_{m,n,N}\big|_Z = (x^my^nI_Z)_N,\quad \dim(\cI_{m,n,N}) = (N-m)(N-n)-k.\]
There are no jumps in dimension by
the construction of the Hilbert scheme on $\Pt$, and the fact that the $N$th filtered subspace of $\C[x,y]$ is
isomorphic to the $N$th graded subspace of $\C[x,y,z]$.

Now consider the bundle on $\Hilb_k \C^2 \times \cG_m^{M,N}$ given by
\begin{equation}
\label{hilb2grassbundle}
\mathcal{E}_m^{M,N} = \cHom_{\C}(\cI_{-m,0,N}/\cI_{-m,1,N},V^\perp),
\end{equation}
which has dimension $(N+m)(N-m)$ for $k,m$, and sufficiently large $-M,N$.
The Euler class of this bundle defines a map $\cU \rightarrow \cV_m^{M,N}$ whose composition with $h_m^{M,N}$ is the isomorphism
\[\varphi_m : \cU\rightarrow \cV_m,\quad \varphi_m(u_\mu) = v_{\mu^t,m}\]
of inner-product spaces over $\C(t)$. Substituting
\[\ch V_{\mu,m} = \sum_i z^{m+\mu_i-i+1},\quad \ch V_{\nu,n}^\perp = \sum_i z^{n-\nu^t_i+i},\quad \ch \cL = z^a\]
in \eqref{Echar}, we have deduced the following theorem:
\begin{thm}
When $S = \C^2$ with the torus action \eqref{T},
\[\bW(\cL,z) = z^{-ma} \varphi_n^{-1} Y(a,z) \varphi_m,\]
where $a=n-m$, and $\cL$ is the trivial bundle with character $z^a$.
\qed
\end{thm}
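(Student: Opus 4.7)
The plan is to check directly that the matrix elements in the fixed-point bases of $\bW(\cL, z)$ and of $z^{-ma} \varphi_n^{-1} Y(a, z) \varphi_m$ coincide. Both have already been computed explicitly: (\ref{Whooks}) for $\bW(\cL, z)$ and (\ref{W}) for $Y(a, z)$. So after the specialization $(t_1, t_2) = (t, -t)$, $c_1(\cL) = at$, everything reduces to a term-for-term comparison.

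First, I would verify the transport of fixed-point bases under $\varphi_m$. Because $\mathcal{E}_m^{M,N}$ in (\ref{hilb2grassbundle}) has $T$-fiber $\cHom(\cI_{-m,0,N}/\cI_{-m,1,N}, V^\perp)$, and at the fixed point $I_\mu \in \Hilb_k \C^2$ the $T$-character of $\cI_{-m,0,N}/\cI_{-m,1,N}$ is $\sum_{i=1}^{m+N} z^{-m - (\mu^t_i - i + 1)}$ (the $y^0$-part of $x^{-m} I_\mu$), this coincides with the character of $V_{\mu^t, m}^{M,N}$. Hence the Euler class of $\mathcal{E}_m^{M,N}$ vanishes at $(I_\mu, V)$ unless $V = V_{\mu^t,m}^{M,N}$, giving $\varphi_m(u_\mu) = v_{\mu^t, m}$ with the normalization built into the Euler class.

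Second, I would substitute the characters
\[\ch V_{\mu^t, m} = \sum_i z^{m + \mu^t_i - i + 1}, \quad \ch V_{\nu^t, n}^\perp = \sum_i z^{n - \nu_i + i}, \quad \ch \cL = z^a\]
into (\ref{Echar}) and compare with the hook factors of (\ref{W}). Equivalently, one may start from (\ref{Whooks}), set $(t_1, t_2) = (t, -t)$ and $c_1(\cL) = at$, and use the standard transposition identities $a_\mu(\square) = \ell_{\mu^t}(\square^t)$ and $\ell_\mu(\square) = a_{\mu^t}(\square^t)$ to rewrite the products in terms of $\mu^t$ and $\nu^t$. The two hook products agree. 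The incidence condition $\delta(V_{\mu^t, m} \subset V_{\nu^t, n})$ in (\ref{W}) requires no separate check: when it fails, at least one hook factor in the product is forced to vanish.

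Third, I would reconcile the $z$-exponents. In (\ref{W}) the exponent is $k = |\nu| - |\mu| + n(n+1)/2 - m(m+1)/2$, while the definition of $Y(a, f, z)$ inserts an additional factor $z^{-a(a+1)/2}$. With $a = n - m$ one computes
\[k - \tfrac{a(a+1)}{2} \;=\; |\nu| - |\mu| + ma.\]
The matrix element of $\bW$ in (\ref{bW}) carries $z^{|\nu| - |\mu|}$, so the discrepancy is exactly $z^{ma}$, which is absorbed by the prefactor $z^{-ma}$ in the statement. The only delicate point is the weight bookkeeping in the first step; the rest is a direct formula comparison.
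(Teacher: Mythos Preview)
Your proposal is correct and follows essentially the same route as the paper: the theorem is stated with \qed\ because the preceding discussion already \emph{is} the proof, namely specializing \eqref{Whooks} at $(t_1,t_2)=(t,-t)$, $c_1(\cL)=at$, identifying it with \eqref{W} under $\mu\leftrightarrow\mu^t$ via the geometric construction of $\varphi_m$ from the bundle \eqref{hilb2grassbundle}, and substituting the characters of $V_{\mu^t,m}$, $V_{\nu^t,n}^\perp$ into \eqref{Echar}. Your explicit bookkeeping of the $z$-exponent $k-\tfrac{a(a+1)}{2}=|\nu|-|\mu|+ma$ and your remark that the factor $\delta(V_{\mu^t,m}\subset V_{\nu^t,n})$ in \eqref{W} is redundant (a hook factor vanishes whenever the inclusion fails) are useful details the paper leaves implicit, but the argument is the same.
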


Note that $z$ is merely a place holder for the number of points
on the Hilbert scheme side, but varies within each Grassmannian.

\end{document}